\def\today{\number\day\space\ifcase\month\or   January\or February\or
	March\or April\or May\or June\or   July\or August\or September\or
	October\or November\or December\fi\   \number\year}
\theoremstyle{plain}
\newtheorem{lma}{Lemma}[section]
\newaliascnt{thmCt}{lma}
\newtheorem{thm}[thmCt]{Theorem}
\newaliascnt{corCt}{lma}
\newtheorem{cor}[corCt]{Corollary}
\newaliascnt{propCt}{lma}
\newtheorem*{thm*}{Theorem}
\newtheorem*{cor*}{Corollary}
\newtheorem*{prop*}{Proposition}
\newtheorem*{lma*}{Lemma}
\theoremstyle{plain}
\newtheorem{thmintro}{Theorem} 
\newaliascnt{introcorCt}{thmintro}
\newtheorem{introcor}[introcorCt]{Corollary}
\newaliascnt{egintroCt}{thmintro}
\newaliascnt{propintroCt}{thmintro}
\newtheorem{propintro}[propintroCt]{Proposition}
\theoremstyle{definition}
\newaliascnt{introquesCt}{thmintro}
\newaliascnt{pbmintroCt}{thmintro}
\newaliascnt{dfnintroCt}{thmintro}
\newaliascnt{pgrCt}{lma}
\newaliascnt{dfCt}{lma}
\newaliascnt{remCt}{lma}
\newtheorem{rem}[remCt]{Remark}
\newaliascnt{remsCt}{lma}
\newaliascnt{egCt}{lma}
\newaliascnt{egsCt}{lma}
\newaliascnt{qstCt}{lma}
\newaliascnt{pbmCt}{lma}
\newaliascnt{notaCt}{lma}
\theoremstyle{theorem}
\newcommand{\N}{\mathbb N}
\newcommand{\Z}{\mathbb Z}
\newcommand{\F}{\mathbb F}
\DeclareMathOperator{\Hom}{Hom}
\DeclareMathOperator{\pt}{pt}
\DeclareMathOperator{\Cl}{\mathrm{Cl}}
\author[S. Geffen]{Shirly Geffen}
\address{Shirly Geffen, Mathematisches Institut, Fachbereich Mathematik und Informatik der
	Universit\"at M\"unster, Einsteinstrasse 62, 48149 M\"unster, Germany.
}
\email{sgeffen@uni-muenster.de}
\urladdr{https://shirlygeffen.com/}
\author[J. Kranz]{Julian Kranz}
\address{Julian Kranz, Institute for Analysis and Numerics,
	Applied Mathematics M\"unster,
	Faculty of Mathematics and Computer Science,
	University of M\"unster,
	Einsteinstrasse 62,
	48149 M\"unster,
	Germany.}
\email{julian.kranz@uni-muenster.de}
\urladdr{https://sites.google.com/view/juliankranz/}
\thanks{
	J. Kranz was supported by the Engineering and Physical Sciences Research Council [Grant Ref: EP/X026647/1].
	S. Geffen was partially supported by the ERC Advanced Grant 834267 - AMAREC, by the Deutsche Forschungsgemeinschaft (DFG, German Research Foundation) - Project-ID 427320536 - SFB 1442, as well as by Germany's Excellence Strategy EXC	2044-390685587, Mathematics M\"unster: Dynamics-Geometry-Structure.
}
\title[$C^*$-algebras associated to boundary actions of 3-manifold groups]{Note on $C^*$-algebras associated to boundary actions of hyperbolic $3$-manifold groups}
\subjclass[2020]{Primary 46L80, 57K32, Secondary 46L35}
\keywords{hyperbolic $3$-manifolds, Crossed products, $K$-theory for $C^*$-algebras}
\begin{document}
	
	\begin{abstract}
		Using Kirchberg--Phillips' classification of purely infinite $C^*$-al\-ge\-bras by $K$-theory, we prove that the isomorphism types of crossed product $C^*$-algebras associated to certain hyperbolic $3$-manifold groups acting on their Gromov boundary only depend on the manifold's homology. As a result, we obtain infinitely many pairwise non-isomorphic hyperbolic groups all of whose associated crossed products are isomorphic. These isomomorphisms are not of dynamical nature in the sense that they are not induced by isomorphisms of the underlying groupoids. 
	\end{abstract}
	\maketitle
	
	\section{Introduction}
	
	The Kirchberg--Phillips classification theorem \cite{Kirchberg, Phillips} classifies so-called \emph{UCT Kirchberg $C^*$-algebras} up to isomorphism by topological $K$-theory. As a consequence, this theorem produces many `surprising' isomorphisms of $C^*$-algebras that cannot be constructed by hand, such as Kirchberg's isomorphism $\mathcal O_2 \otimes \mathcal O_2 \cong \mathcal O_2$. The class of UCT Kirchberg $C^*$-algebras contains many examples such as the (reduced) crossed product $C^*$-algebra $C(\partial G)\rtimes_r G$ associated to the action of a non-elementary torsion-free hyperbolic group $G$ on its Gromov-boundary \cite{claire, Tu}. In fact, crossed products of arbitrary minimal amenable topologically free actions $G\curvearrowright X$ of acylindrically hyperbolic groups belong to this class \cite{Classifiability}.
	Although this large class of $C^*$-algebras is \emph{in principle} classifiable by $K$-theory, explicit $K$-theory computations remain difficult in general. 
	For the specific example of the action of a torsion-free hyperbolic group $G$ on its boundary, Emerson and Meyer \cite{Gysin} relate the $K$-theory of $C(\partial G)\rtimes_r G$ to the $K$-theory and $K$-homology of the classifying space $BG$. 
	In this short note, we use their result to obtain explicit computations for fundamental groups of hyperbolic $3$-manifolds.
	Our techniques are identical to an analogous result for non-compact manifolds \cite[Proposition~1.6]{MM}

	\begin{thmintro}\label{thm-intro}
		Let $d\geq 0$ be a fixed integer and let $G$ be the fundamental group of a closed connected orientable hyperbolic $3$-manifold $M$ with $H_1(M)\cong \Z^d$. Then there are isomorphisms
		\[K_0(C(\partial G)\rtimes_r G)\cong K_1(C(\partial G)\rtimes_r G) \cong \Z^{2d+2}\]
		that identify the class of the unit $[1]\in K_0(C(\partial G)\rtimes_r G)$ with $(1,0)\in \Z\oplus \Z^{2d+1}$. 
	\end{thmintro}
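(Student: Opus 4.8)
The plan is to run the Emerson--Meyer machinery through the boundary short exact sequence and reduce everything to a computation of the topological $K$-theory and $K$-homology of the manifold $M$ itself. Since $M$ is hyperbolic its universal cover is $\mathbb{H}^3$, which is contractible, so $M$ is aspherical, $M\simeq BG$, and the Gromov boundary is $\partial G\cong S^2$. I would start from the $G$-equivariant short exact sequence
\[0 \to C_0(\mathbb{H}^3) \to C(\overline{\mathbb{H}^3}) \to C(\partial G) \to 0,\]
where $\overline{\mathbb{H}^3}$ is the closed-ball compactification. Applying the (exact) reduced crossed product functor and the six-term exact sequence, I would identify the two outer terms: $C_0(\mathbb{H}^3)\rtimes_r G$ is Morita equivalent to $C(M)$ since $G\curvearrowright \mathbb{H}^3$ is free and proper with quotient $M$, so its $K$-theory is $K^*(M)$; and $C(\overline{\mathbb{H}^3})\rtimes_r G$, by the Emerson--Meyer analysis (using that $G$ satisfies Baum--Connes by Lafforgue and that $\overline{\mathbb{H}^3}$ is contractible, so the inclusion of constants $\C\to C(\overline{\mathbb{H}^3})$ induces an isomorphism on $K_*(-\rtimes_r G)$), has $K$-theory $K_*(C^*_r(G))\cong K_*(BG)=K_*(M)$. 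This is precisely their Gysin sequence relating $K_*(C(\partial G)\rtimes_r G)$ to $K^*(M)$ and $K_*(M)$.

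Next I would compute the four $K$-groups of $M$. By Poincar\'e duality for the closed oriented $3$-manifold $M$, the universal coefficient theorem, and the hypothesis $H_1(M)\cong\Z^d$, the integral (co)homology is $(\Z,\Z^d,\Z^d,\Z)$ in degrees $0,1,2,3$. Feeding this into the Atiyah--Hirzebruch spectral sequence, the only potentially nonzero differential on a $3$-dimensional complex is $d_3$, built from $\mathrm{Sq}^2$, which vanishes here (it is zero out of $H^0$ for degree reasons, and dually into $H_0$). Hence the spectral sequence collapses, and since every group in sight is free there are no extension problems, giving
\[K^0(M)\cong K^1(M)\cong K_0(M)\cong K_1(M)\cong \Z^{d+1},\]
with the even groups built from $H_0\oplus H_2$ (resp.\ $H^0\oplus H^2$) and the odd groups from $H_1\oplus H_3$ (resp.\ $H^1\oplus H^3$).

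The crux, and the step I expect to be the main obstacle, is to show that the map $K^*(M)\to K_*(M)$ induced by the ideal inclusion $C_0(\mathbb{H}^3)\subset C(\overline{\mathbb{H}^3})$ vanishes. By the Emerson--Meyer description this map is cap product with the $K$-theoretic Euler class of the tangent bundle $TM$ (equivalently, their equivariant Euler characteristic operator). The key geometric input is that every closed orientable $3$-manifold is parallelizable: thus $TM$ is trivial of positive rank, admits a nowhere-vanishing section, and every Euler-type class vanishes (one also sees $\chi(M)=0$ directly since $\dim M$ is odd). Therefore the map is zero and the six-term sequence breaks into two short exact sequences
\[0 \to K_i(M) \to K_i(C(\partial G)\rtimes_r G) \to K^{i+1}(M) \to 0,\]
which split because the quotients are free. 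Combined with the previous paragraph this yields $K_i(C(\partial G)\rtimes_r G)\cong K_i(M)\oplus K^{i+1}(M)\cong\Z^{2d+2}$ for $i=0,1$ (indices mod $2$).

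Finally I would locate the unit. The class $[1]$ is the image under the quotient map of the unit of $C(\overline{\mathbb{H}^3})\rtimes_r G$, which corresponds under the identifications above to $[1]\in K_0(C^*_r(G))$. By naturality of the Baum--Connes assembly map with respect to the inclusion of the trivial subgroup $\{1\}\hookrightarrow G$, the class $[1_{C^*_r(G)}]$ is the assembly of the point class $[\mathrm{pt}]\in K_0(BG)$, i.e.\ the generator of the $H_0(M)\cong\Z$ summand of $K_0(M)$. Since the left-hand map $K_0(M)\to K_0(C(\partial G)\rtimes_r G)$ in the split sequence is injective, the unit is carried to the generator of this distinguished $\Z$ summand, giving $[1]=(1,0)\in\Z\oplus\Z^{2d+1}$ as claimed.
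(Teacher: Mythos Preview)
Your proposal is correct and follows essentially the same route as the paper: invoke the Emerson--Meyer Gysin sequence (the paper cites \cite{Gysin} directly to get the two short exact sequences, while you unpack the mechanism via the boundary extension and the vanishing of the $K$-theoretic Euler class), compute $K_*(M)$ and $K^*(M)$ from the Atiyah--Hirzebruch spectral sequence, split the sequences since the outer terms are free, and track the unit via naturality of the assembly map with respect to $\{1\}\hookrightarrow G$. The only cosmetic differences are that the paper deduces the splitting into short exact sequences from $\chi(M)=0$ (Poincar\'e duality in odd dimension) rather than from parallelizability, and establishes the collapse of the spectral sequence by a retract-to-a-point argument rather than by identifying $d_3$ with a Steenrod operation.
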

	Brock--Dunfield's work \cite{brockdunfield} shows that there exists a rich supply of groups that satisfy the assumptions in \autoref{thm-intro}. Thus, we obtain examples of isomorphisms of crossed product $C^*$-algebras:
	\begin{introcor}\label{cor-intro}
		There are infinitely many pairwise non-isomorphic hyperbolic groups $G_i,~ i\in \N$ with pairwise isomorphic crossed products $C(\partial G_i)\rtimes_r G_i,~ i\in \N$.
	\end{introcor}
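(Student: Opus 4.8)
The plan is to combine Theorem~\ref{thm-intro} with the Kirchberg--Phillips classification theorem and with the known structural fact that the crossed products in question are UCT Kirchberg algebras. First I would invoke the results cited in the introduction: since each $G_i$ is a non-elementary torsion-free hyperbolic group, the action $G_i\curvearrowright \partial G_i$ on its Gromov boundary is minimal, amenable, and topologically free, so by \cite{claire,Tu,Classifiability} each crossed product $C(\partial G_i)\rtimes_r G_i$ is a unital UCT Kirchberg algebra. By Theorem~\ref{thm-intro}, for any $G$ that is the fundamental group of a closed connected orientable hyperbolic $3$-manifold $M$ with $H_1(M)\cong\Z^d$, the $K$-theory is $K_0\cong K_1\cong \Z^{2d+2}$ with the class of the unit sent to $(1,0)\in\Z\oplus\Z^{2d+1}$. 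Thus the full $K$-theoretic invariant, including the position of the unit, depends only on the integer $d$ and not on the particular manifold.

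Next I would produce the infinite supply of groups. By \cite{brockdunfield}, there exist infinitely many closed connected orientable hyperbolic $3$-manifolds $M_i$ that are pairwise non-homeomorphic with a fixed first homology, say $H_1(M_i)\cong \Z^d$ for a common value of $d$ (for instance $d=0$, i.e.\ rational homology spheres). Their fundamental groups $G_i=\pi_1(M_i)$ are then torsion-free (fundamental groups of aspherical manifolds are torsion-free) non-elementary hyperbolic groups. By Mostow rigidity these groups are pairwise non-isomorphic, since an isomorphism of fundamental groups of closed hyperbolic $3$-manifolds is induced by an isometry, hence a homeomorphism of the manifolds; as the $M_i$ are pairwise non-homeomorphic, the $G_i$ are pairwise non-isomorphic. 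This yields the required infinite family of pairwise non-isomorphic hyperbolic groups.

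Finally I would assemble the isomorphisms. For all $i$, Theorem~\ref{thm-intro} gives the same pointed $K$-theory datum $\bigl(K_0,[1],K_1\bigr)\cong\bigl(\Z^{2d+2},(1,0),\Z^{2d+2}\bigr)$, and the UCT holds, so the classification invariant of Kirchberg--Phillips \cite{Kirchberg,Phillips} agrees across the whole family. Applying the classification theorem, I would conclude that all the crossed products $C(\partial G_i)\rtimes_r G_i$ are mutually isomorphic as $C^*$-algebras, completing the proof.

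The main obstacle, conceptually, is establishing that the $G_i$ are genuinely pairwise non-isomorphic rather than merely non-homeomorphic at the level of manifolds; this is exactly where Mostow rigidity is essential, converting the topological distinctness supplied by \cite{brockdunfield} into group-theoretic distinctness. The $C^*$-algebraic half is then a direct citation of classification, the only care being to match the unit class correctly so that a genuine unital isomorphism---not merely a stable one---is obtained.
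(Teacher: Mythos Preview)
Your proposal is correct and follows essentially the same route as the paper: invoke Brock--Dunfield to obtain infinitely many closed orientable hyperbolic $3$-manifolds with a fixed $H_1$, use Mostow rigidity to conclude the fundamental groups are pairwise non-isomorphic, cite \cite{claire,Tu} to see the crossed products are unital UCT Kirchberg algebras, and then apply \autoref{thm-intro} together with Kirchberg--Phillips classification. The only cosmetic difference is that the paper phrases the Brock--Dunfield input as ``pairwise non-isometric'' rather than ``pairwise non-homeomorphic'', but for closed hyperbolic $3$-manifolds these are equivalent by Mostow rigidity, so your argument goes through unchanged.
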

	
	These isomorphisms are `surprising' in the sense that they cannot be obtained from an isomorphism of the underlying groupoids.
	
	\begin{propintro}\label{prop-intro}
		Let $G_i$ and $G_j$ be two non-isomorphic groups as in \autoref{thm-intro}. Then there is no isomorphism $C(\partial G_i)\rtimes_r G_i\cong C(\partial G_j)\rtimes_r G_j$ that restricts to an isomorphism $C(\partial G_i)\cong C(\partial G_j)$. In particular, $C(\partial G_i)\rtimes_r G_i$ contains infinitely many pairwise non-conjugate Cartan subalgebras with spectrum $S^2$. 
	\end{propintro}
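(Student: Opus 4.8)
The plan is to reduce the statement about $C^*$-algebraic isomorphisms to a statement about the underlying transformation groupoids, and then to invoke rigidity of boundary actions. First I would record that the action $G\curvearrowright \partial G$ of a torsion-free non-elementary hyperbolic group is topologically free: since $G=\pi_1(M)$ for a closed hyperbolic $3$-manifold, every nontrivial element is loxodromic and fixes exactly two points of $\partial G\cong S^2$, so each fixed-point set is nowhere dense and the points of trivial isotropy form a dense $G_\delta$. Consequently $C(\partial G)$ is a Cartan subalgebra of $C(\partial G)\rtimes_r G$ whose Weyl groupoid, by Renault's reconstruction theorem, is the transformation groupoid $\partial G\rtimes G$ with trivial twist. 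Thus a hypothetical isomorphism $C(\partial G_i)\rtimes_r G_i\cong C(\partial G_j)\rtimes_r G_j$ carrying $C(\partial G_i)$ onto $C(\partial G_j)$ would induce an isomorphism of topological groupoids $\partial G_i\rtimes G_i\cong \partial G_j\rtimes G_j$.

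Next I would translate this groupoid isomorphism into a continuous orbit equivalence between the boundary actions $G_i\curvearrowright \partial G_i$ and $G_j\curvearrowright \partial G_j$; for topologically free actions this correspondence is standard. The crux is then to upgrade a continuous orbit equivalence to a genuine isomorphism $G_i\cong G_j$, contradicting the hypothesis. This is precisely the continuous orbit equivalence rigidity for Gromov boundary actions employed in \cite[Proposition~1.6]{MM}, which I would invoke in the same way here.

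I expect this rigidity step to be the main obstacle. The difficulty is that all fundamental groups of closed hyperbolic $3$-manifolds are quasi-isometric, each being quasi-isometric to $\mathbb H^3$, so the soft conclusion that a continuous orbit equivalence induces a quasi-isometry $G_i\to G_j$ carries no information. What is needed is the finer fact that the orbit cocycle is cohomologous to a group isomorphism: concretely, that the homeomorphism $\partial G_i\to \partial G_j$ implementing the orbit equivalence can be chosen equivariant for an isomorphism $G_i\to G_j$. For these rank-one lattice actions on $S^2$ this is where the conformal and Mostow rigidity of the boundary enters, and it is the only genuinely nontrivial input.

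Finally I would deduce the last assertion of the proposition. By \autoref{cor-intro} the algebras $A_n:=C(\partial G_n)\rtimes_r G_n$ are all isomorphic; fixing isomorphisms $\theta_n\colon A_n\xrightarrow{\ \cong\ } A$ onto a common algebra $A$, the subalgebras $D_n:=\theta_n(C(\partial G_n))\subseteq A$ are Cartan subalgebras with spectrum $\partial G_n\cong S^2$. If $D_n$ and $D_m$ were conjugate by some $\alpha\in\Aut(A)$, then $\theta_m^{-1}\alpha\,\theta_n\colon A_n\to A_m$ would be an isomorphism carrying $C(\partial G_n)$ onto $C(\partial G_m)$, forcing $G_n\cong G_m$ by the first part and hence $n=m$. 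Since the groups supplied by \cite{brockdunfield} form an infinite family of pairwise non-isomorphic groups as in \autoref{thm-intro}, the subalgebras $D_n$ yield infinitely many pairwise non-conjugate Cartan subalgebras with spectrum $S^2$.
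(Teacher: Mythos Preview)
Your overall structure is sound and matches the paper through the use of Renault's reconstruction theorem to reduce a Cartan-preserving isomorphism to an isomorphism of transformation groupoids $\partial G_i \rtimes G_i \cong \partial G_j \rtimes G_j$. The divergence is in how you extract a group isomorphism from this groupoid isomorphism.

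The paper does \emph{not} invoke any boundary rigidity or Mostow-type input. Instead it observes that for an action $G\curvearrowright X$ on a \emph{connected} space, the topological full group $[[X\rtimes G]]$ is canonically isomorphic to $G$: any global bisection $U\subseteq X\times G$ is homeomorphic to $X$, hence connected, so its projection to the discrete factor $G$ is constant. Since the topological full group is a groupoid invariant and $\partial G\cong S^2$ is connected, the groupoid isomorphism immediately yields $G_i\cong G_j$. In your continuous-orbit-equivalence language this is the same observation: the cocycle $a\colon G_i\times \partial G_i\to G_j$ is continuous into a discrete target and $\partial G_i$ is connected, so $a(g,\cdot)$ is constant for each $g$, and the resulting map $G_i\to G_j$ is automatically a group isomorphism. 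This completely dissolves the step you flagged as ``the main obstacle''; no conformal or Mostow rigidity is needed, and the fact that all such groups are quasi-isometric to $\mathbb H^3$ is irrelevant.

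Your proposed route via a general continuous orbit equivalence rigidity theorem could in principle be made to work, but it is much heavier machinery, and the reference you give does not contain such a result: \cite[Proposition~1.6]{MM} is the $K$-theory computation parallel to \autoref{thm-intro}, not an orbit-equivalence rigidity statement. Your deduction of the final assertion about infinitely many pairwise non-conjugate Cartan subalgebras with spectrum $S^2$ is correct and indeed spells out what the paper leaves implicit.
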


		\subsection*{Acknowledgments} 
	This note arose from discussions with Johannes Ebert and is mostly based on his ideas. We are indebted to him for sharing his strategy to combine the Baum--Connes conjecture with the Atiyah--Hirzebruch spectral sequence, for educating us about hyperbolic $3$-manifolds, for guiding us through the computations, and for encouraging us to use his ideas and publish this note. 
	After this note was circulated as a preprint, Bram Mesland and Haluk \c{S}eng\"{u}n brought to our attention that \autoref{thm-intro} can be deduced from their joint work \cite{MM}.
	We are moreover grateful to Jakub Curda for pointing out the reference \cite{Matthey} which drastically simplified the proof.

	\section{Isomorphism types of crossed products}\label{sec:K_theory}
	
	Let $M$ be a closed connected orientable hyperbolic $3$-manifold. Then the universal cover $\widetilde{M}$ of $M$ is isometric to the $3$-dimensional hyperbolic space $\mathbb{H}^3$ by the Killing--Hopf theorem \cite[Corollary~12.5]{Lee}. 
	Let $G=\pi_1(M)$ be the fundamental group of $M$. Considering the natural (free) action $G\curvearrowright \widetilde{M}$ by Deck transformations, and the fact that $\widetilde{M}$ is contractible, the orbit space $\widetilde{M}/G\cong M$ is a finite model for the classifying space $BG$. 
	It follows that $G$ is finitely generated and torsion-free \cite[\S VIII, Exercise~4.1 and Corollary~2.5]{Brown}.
	Moreover, since the action $G\curvearrowright \widetilde{M}$ is proper and co-compact, $G$ is quasi-isometric to $\mathbb{H}^3$ by the Milnor-\v{S}varc lemma \cite[\S I.8, Proposition~8.9]{bridsonhaeflinger}. In particular, $G$ is a Gromov-hyperbolic group with Gromov boundary $\partial G\cong \partial  \mathbb{H}^3\cong S^2$ \cite[\S III H, Theorem~1.9 and Theorem~3.9]{bridsonhaeflinger}.\\ 
	
	We refer the reader to \cite{K-theory} for the definition of crossed product $C^*$-algebras, topological $K$-theory for $C^*$-algebras, and the Baum--Connes conjecture. For a unital $C^*$-algebra $A$, we denote by $[1_A]\in K_0(A)$ the class of its unit (or equivalently the class of the rank-one $A$-module $[A]$). \\
	
	For a connected space $X$ and a choice of base point $x_0\in X$, we denote by $[x_0]\in H_0(X)$ the image of the canonical generator $1\in \Z=H_0(\{x_0\})$ under the natural map $H_0(\{x_0\})\to H_0(X)$ and by $[1_X]\in K_0(X)$ the image of the canonical generator $1\in \Z=K_0(\{x_0\})$ under the natural map $K_0(\{x_0\})\to K_0(X)$. 
	
	The following computations of $K$-theory and $K$-homology of $3$-manifolds using their cohomology and homology groups is an application of the Atiyah-Hirzebruch spectral sequence. The proof is given in \cite[Proposition 2.1]{Matthey}, but we include a brief introduction to spectral sequences and more detailed computations for non-experts in \autoref{appendixA}. Note that a similar result was obtained in \cite[Section~1]{MM}.
	
	\begin{lma}\label{lma-k-homology}
		Let $M$ be a closed connected orientable $3$-manifold. Then there are isomorphisms
		\begin{align}
			K_0(M)&\cong H_0(M)\oplus H_2(M)\label{iso1},\\
			K_1(M)&\cong H_1(M)\oplus H_3(M)\label{iso2},\\
			K^0(M)&\cong H^0(M)\oplus H^2(M)\label{iso3},\\
			K^1(M)&\cong H^1(M)\oplus H^3(M)\label{iso4},
		\end{align}
		that identify $[1_M]\in K_0(M)$ with $[x_0]\in H_0(M)$.
	\end{lma}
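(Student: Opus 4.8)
The plan is to run the Atiyah--Hirzebruch spectral sequence (AHSS) for complex $K$-homology and $K$-cohomology of the $3$-manifold $M$, and to observe that it collapses for dimension reasons. I would begin by recalling that the AHSS computing $K_*(M)$ has $E^2$-page $E^2_{p,q}=H_p(M;K_q(\mathrm{pt}))$, and that $K_q(\mathrm{pt})\cong \Z$ for $q$ even and $0$ for $q$ odd, by Bott periodicity. Since $M$ is a $3$-manifold we have $H_p(M)=0$ for $p<0$ and $p>3$, so the only nonzero entries of the $E^2$-page lie in columns $p=0,1,2,3$. The differentials on the $E^r$-page have bidegree $(-r,r-1)$, so the first potentially nonzero differential is $d^2\colon E^2_{p,q}\to E^2_{p-2,q+1}$. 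Because $K_q(\mathrm{pt})$ vanishes in odd degrees, every $d^2$ maps a group sitting in an even $q$-row to a group in an odd $q$-row (or vice versa), and at least one of source and target is therefore zero; hence $d^2=0$. All higher differentials $d^r$, $r\geq 3$, also vanish since their source or target lies outside the strip $0\leq p\leq 3$. Thus $E^2=E^\infty$.

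From collapse I would read off the associated graded of $K_n(M)$. Grouping the surviving $E^\infty_{p,q}$ with $p+q=n$ even gives $K_0(M)$ with associated graded $H_0(M)\oplus H_2(M)$ (taking $q=0$ and $q=-2$), while $p+q$ odd gives $K_1(M)$ with associated graded $H_1(M)\oplus H_3(M)$. To promote these associated-graded statements to the honest direct-sum decompositions \eqref{iso1}--\eqref{iso2}, I would note that each graded piece is finitely generated and that the relevant extension problems split: in each degree the filtration of $K_n(M)$ has only two nonzero subquotients, one of which is a free abelian group (e.g.\ $H_3(M)\cong\Z$ by orientability, and $H_0(M)\cong\Z$), so the short exact sequences split. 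An entirely parallel argument with the cohomological AHSS, whose $E^2$-page is $E_2^{p,q}=H^p(M;K^q(\mathrm{pt}))$ and whose differentials again connect even and odd $q$-rows, yields \eqref{iso3}--\eqref{iso4}.

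It remains to identify the unit class $[1_M]\in K_0(M)$ with $[x_0]\in H_0(M)$ under \eqref{iso1}. The class $[1_M]$ is the image of the generator of $K_0(\mathrm{pt})$ under the map induced by the collapse $M\to\mathrm{pt}$, i.e.\ it is pushed forward from a point (or, dually, pulled back along the unique map), so it lands in the lowest filtration piece $F_0 K_0(M)$, which under the collapsed spectral sequence is exactly $E^\infty_{0,0}=H_0(M)$. Naturality of the AHSS for the inclusion $\{x_0\}\hookrightarrow M$ identifies the edge homomorphism $K_0(\mathrm{pt})\to E^\infty_{0,0}\hookrightarrow K_0(M)$ with the map $H_0(\{x_0\})\to H_0(M)$ defining $[x_0]$; this matches $[1_M]$ with $[x_0]$.

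The main obstacle is not the collapse of the spectral sequence, which is forced by the degree parity of $K_*(\mathrm{pt})$, but rather the two bookkeeping points that surround it: verifying that the filtration extensions genuinely split rather than merely giving the correct associated graded, and tracking the unit class through the edge homomorphism with enough care that the identification $[1_M]\leftrightarrow[x_0]$ is canonical. Both are manageable because the filtration in each degree has length two with a free top or bottom piece, but they are the steps where a careless argument would only recover the groups up to extension and would lose the distinguished generator.
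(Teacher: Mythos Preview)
Your argument contains a genuine gap in the treatment of the $d^3$ differential. You claim that all $d^r$ for $r\geq 3$ vanish because source or target lies outside the strip $0\le p\le 3$, but this fails for $r=3$: the differential $d^3\colon E^3_{3,q}\to E^3_{0,q+2}$ has source at $p=3$ and target at $p=0$, both inside the strip, and since $d^3$ shifts $q$ by $r-1=2$ it \emph{preserves} the parity of $q$, so both groups can be nonzero simultaneously. Concretely, $d^3\colon H_3(M)\to H_0(M)$ is a map $\Z\to\Z$ which your reasoning does not exclude. Your closing remark that collapse ``is forced by the degree parity of $K_*(\pt)$'' only kills the even differentials $d^{2},d^{4},\ldots$; the odd ones survive that parity check. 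The same issue arises verbatim in your ``parallel'' cohomological argument, where $d_3\colon H^0(M)\to H^3(M)$ is the potentially nonzero map.

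The paper's appendix addresses exactly this point: it invokes naturality of the AHSS for the retraction $\pt\to M\to\pt$. Since $H^0(\pt)\to H^0(M)$ is an isomorphism and $d_3$ vanishes on the spectral sequence of a point (where $H^3(\pt)=0$), the commutative square forces $d_3\colon H^0(M)\to H^3(M)$ to vanish as well. The analogous naturality argument using $M\to\pt$ handles the homological $d^3$. For $r\geq 4$ your strip argument is correct.

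A smaller issue: in your splitting argument for $K_0(M)$, freeness of the \emph{subgroup} $H_0(M)\cong\Z$ does not split the extension $0\to H_0(M)\to K_0(M)\to H_2(M)\to 0$; you need freeness of the \emph{quotient} $H_2(M)$. This does hold---Poincar\'e duality gives $H_2(M)\cong H^1(M)\cong\Hom(H_1(M),\Z)$, which is free---but that is not the reason you stated.
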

	\begin{proof}
		The isomorphisms \eqref{iso1} and \eqref{iso2} identifying $[1_M]$ with $[x_0]$ are obtained in \cite[Proposition 2.1]{Matthey}. 
		By Poincar\'e duality, we have $H^*(M)\cong H_{3-*}(M)$.
		Moreover, by Kasparov's $K$-theoretic Poincar\'e duality \cite[Theorem 4.10]{Kasparov}\footnote{In order to apply \cite[Theorem 4.10]{Kasparov}, recall that as a closed oriented $3$-manifold, $M$ admits a $\mathrm{spin}^c$-structure and that any $\mathrm{spin}^c$-structure induces a $\Z/2$-graded Morita-equivalence $C_\tau(M)\sim C(M)\hat \otimes  \Cl_3$, where $C_\tau(M)$ denotes the section algebra of the Clifford bundle of $T^*M$ and $\Cl_{3}$ the third complex Clifford algebra.}, we have an isomorphism $K^*(M)\cong K_{3-*}(M)$.
		Together with Bott periodicity, this proves \eqref{iso3} and \eqref{iso4}.
	\end{proof}

	\begin{proof}[Proof of \autoref{thm-intro} {(cf. \cite[Proposition~1.6]{MM})}]

		Using Poincar\'e duality together with $H^1(M)=\Hom_\Z(H_1(M),\mathbb Z)$, the assumptions in \autoref{thm-intro} imply the following isomorphisms:
		\begin{align*}
			H_0(M)&\cong H^3(M)\cong H^0(M)\cong H_3(M)\cong \Z,\\
			H_1(M)&\cong H^2(M)\cong H^1(M)\cong H_2(M)\cong \Z^d.
		\end{align*}
		Combining with \autoref{lma-k-homology}, these allow to compute
		\begin{equation}\label{eq-KofM}
			K_0(M)\cong K_1(M)\cong	K^0(M)\cong	K^1(M)\cong \mathbb Z^{d+1}
		\end{equation}
		and identify 
		\begin{equation}\label{eq-identify}
			K_0(M)  \ni [1_M] \mapsto (1,0)\in \Z\oplus \Z^d.
		\end{equation} 
		
		Note that $M$ has zero Euler characteristic by Poincar\'e duality. Moreover, $G$ satisfies the Baum--Connes conjecture with coefficients by \cite{Lafforgue}. Thus, \cite[Theorem~1]{Gysin} applied to the action $G\curvearrowright \widetilde{M}$ produces short exact sequences
		
		\begin{align}
			\begin{aligned}\label{EM1}
				0\to K_0(C_r^*(G))\xrightarrow{u_*} K_0(C(\partial G)\rtimes_r G) \to K^1(M)\to 0,\\
				0\to K_1(C_r^*(G))\xrightarrow{u_*} K_1(C(\partial G)\rtimes_r G) \to K^0(M)\to 0,
			\end{aligned}
		\end{align}
		
		where the map $u_*$ is induced by the canonical inclusion $u\colon C^*_r(G)\to C(\partial G)\rtimes_r G$.
		Recall that $M$ is a model for $BG$. Thus, the Baum--Connes assembly map 	
		\begin{equation}\label{eq-muG}
			\mu^G_*\colon K_*(M)\xrightarrow{\cong} K_*(C^*_r(G))
		\end{equation}
		is an isomorphism. 
		By applying functoriality of the assembly map to the group homomorphism $\{e\}\to G$, we moreover conclude that
		\begin{equation}\label{eq-mu1}
			\mu^G_0([1_{M}])= [1_{C^*_r(G)}]\in K_0(C^*_r(G)).
		\end{equation}
		Indeed, this follows by applying commutativity of the diagram
		\[ \xymatrix{
			K_0(\pt)\ar[r]^{\mu^{\{e\}}_0} \ar[d] &K_0(\mathbb C)\ar[d]\\
			K_0(M) \ar[r]^{\mu^G_0} &K_0(C^*_r(G))
		}
		\]
		to the element $[1_{\pt}]\in K_0(\pt)$. 
		
		Since $K^*(M)\cong \mathbb Z^{d+1}$, the short exact sequences in \eqref{EM1} split. Using \eqref{eq-muG} at the second step and \eqref{eq-KofM} at the third step, we get
		\[K_0(C(\partial G)\rtimes_r G)\cong K_0(C^*_r(G))\oplus K^1(M)\cong K_0(M)\oplus K^1(M)\cong \Z^{2d+2}.\]

		By \eqref{eq-identify} and \eqref{eq-mu1}, this isomorphism identifies the class of the unit $[1_{C(\partial G)\rtimes_r G}]=u_*([1_{C^*_r(G)}])\in {K_0(C(\partial G)\rtimes_r G)}$ with $(1,0)\in \Z\oplus \Z^{2d+1}$. 
		Analogously, we obtain an isomorphism 
		\[K_1(C(\partial G)\rtimes_r G)\cong K_1(C^*_r(G))\oplus K^0(M)\cong K_1(M)\oplus K^0(M)\cong \Z^{2d+2}.\]		
	\end{proof}

	\begin{proof}[Proof of \autoref{cor-intro}]
		
		By \cite[Theorem 2.1]{brockdunfield}, there are infinitely many pairwise non-isometric closed connected orientable\footnote{Note that although not stated in the theorem, the proof of \cite[Theorem~2.1]{brockdunfield} indeed produces orientable manifolds.} hyperbolic $3$-manifolds $M_i,~ i\in \N$ satisfying $H_1(M_i)\cong \Z^d$. In particular, the fundamental groups $G_i\coloneqq \pi_1(M_i),~ i\in \N$ are pairwise non-isomorphic by Mostow's rigidity theorem \cite{Mostow}. 
		
		The crossed products $A_i\coloneqq C(\partial G_i)\rtimes_r G_i$ are unital simple separable nuclear purely infinite $C^*$-algebras by \cite[Proposition~3.2]{claire} that satisfy the UCT by \cite[Proposition~10.7]{Tu}. 
		Fix $i,j\in \N$. \autoref{thm-intro} implies that there are isomorphisms $K_0(A_i)\cong K_0(A_j)$ and $K_1(A_i)\cong K_1(A_j)$ that identify $[1_{A_i}]$ with $[1_{A_j}]$. Thus, the Kirchberg--Phillips classification theorem \cite{Kirchberg,Phillips} implies that $A_i$ and $A_j$ are isomorphic.
	\end{proof}
	
	\begin{rem}
		We do not know, whether the reduced group $C^*$-algebras $C^*_r(G_i),~i\in \N$ are pairwise isomorphic, although $K$-theory does not distinguish them. To the best of our knowledge, it is an open problem whether the reduced group $C^*$-algebra of a general torsion-free discrete group $G$ recovers the group. However, results of this type do exist for certain classes of groups (see \cite{eckhardt2018c} for instance).
	\end{rem}
	
	As a preparation for the proof of \autoref{prop-intro}, recall that the \emph{transformation groupoid} $X\rtimes G$ associated to a group action $G\curvearrowright X$ is the topological groupoid whose object space is given by $(X\rtimes G)^{(0)}\coloneqq X$ and whose morphism space is given by $X\rtimes G\coloneqq X\times G$, with range, source, and composition maps $r,s,\cdot$ determined by the formulas
	\[r(x,g)=x, \quad s(x,g)=g^{-1}x,\quad (x,g)\cdot (g^{-1}x,h)=(x,gh),\quad x\in X, g,h\in G.\]
	The \emph{topological full group} $[[\mathcal G]]$ of a topological groupoid $\mathcal G$ is the group of all global bisections, that is, open subsets $U\subseteq \mathcal G$ of the arrow space such that the restrictions $r|_U\colon U\to \mathcal G^{(0)}$ and $s|_U\colon U\to \mathcal G^{(0)}$ are homeomorphisms\footnote{Usually, topological full groups are only defined and interesting for groupoids with totally disconnected object space, see \cite{Matui}.}. 
	Note that the topological full group of a topological groupoid is an isomorphism invariant. 
	
	\begin{lma}\label{lma-fullgroup}
		Let $X$ be a connected space and $G\curvearrowright X$ an action of a discrete group. Then the natural map 
		\[G\to [[X\rtimes G]],\quad g\mapsto X\times \{g\}\]
		is a group isomorphism.
	\end{lma}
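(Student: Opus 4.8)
The plan is to show that the given map is a well-defined group homomorphism and then to prove it is bijective; the only hypothesis that will really be used is the connectedness of $X$. First I would check that each $X\times\{g\}$ is genuinely a global bisection. Since $G$ is discrete, $X\times\{g\}$ is open in the arrow space $X\times G$; the range map restricts to the projection $(x,g)\mapsto x$, a homeomorphism onto $X$, while the source map restricts to $(x,g)\mapsto g^{-1}x$, the composite of that projection with the homeomorphism $x\mapsto g^{-1}x$ of $X$. To see that $g\mapsto X\times\{g\}$ is a homomorphism, I would compute the groupoid product directly: a typical element of $(X\times\{g\})\cdot(X\times\{h\})$ has the form $(x,g)\cdot(g^{-1}x,h)=(x,gh)$, so letting $x$ range over $X$ gives exactly $X\times\{gh\}$. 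Injectivity is then immediate, since for nonempty $X$ (which I take to be implicit in ``connected'') the identity $X\times\{g\}=X\times\{h\}$ forces $g=h$ upon projecting to the second coordinate.

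The substance of the lemma lies in surjectivity. Given an arbitrary global bisection $U$, I would decompose it along the discrete variable by setting $U_g\coloneqq U\cap(X\times\{g\})$, so that $U=\bigsqcup_{g\in G}U_g$ with each $U_g$ open. Because $r|_U$ is a bijection onto $X$, every $x\in X$ admits a unique $g\in G$ with $(x,g)\in U$; writing this $g$ as $\phi(x)$ realizes $U$ as the graph $\{(x,\phi(x)):x\in X\}$ of a function $\phi\colon X\to G$. The fibers $V_g\coloneqq\phi^{-1}(g)=r(U_g)$ are then pairwise disjoint since $\phi$ is single-valued, cover $X$ since $\phi$ is everywhere defined, and are open since $r$ carries the open set $U_g$ homeomorphically onto $V_g$.

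The crux --- and the one place where the hypotheses enter essentially --- is to deduce from this open partition that $\phi$ is constant. Here I would invoke connectedness of $X$: a cover of a connected space by pairwise disjoint open sets has a single nonempty member, so some $V_g$ equals $X$ and hence $U=X\times\{g\}$, proving surjectivity. I anticipate no real obstacle in carrying this out; it is perhaps worth remarking that the source-map half of the bisection condition is never used in the surjectivity argument, entering only in the verification that the $X\times\{g\}$ are themselves bisections.
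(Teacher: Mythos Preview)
Your proof is correct and follows essentially the same idea as the paper's. The paper's version is more compressed: it observes that since $r|_U\colon U\to X$ is a homeomorphism, $U$ itself is connected, and hence the continuous projection $U\to G$ to the discrete group is constant. Your open-partition argument on $X$ is the same reasoning transported through the homeomorphism $r|_U$, just spelled out in more detail.
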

	\begin{proof}
		If $U\in [[X\rtimes G]]$ is a global bisection, then $U$ is homeomorphic to $X$ and therefore connected. In particular, the projection of $U\subseteq X\times G$ to $G$ is constant. 
	\end{proof}

	\begin{proof}[Proof of \autoref{prop-intro}]
		Assume by contradiction that there is an isomorphism 
		\[C(\partial G_i)\rtimes_r G_i\cong C(\partial G_j)\rtimes_r G_j\]
		that restricts to an isomorphism $C(\partial G_i)\cong C(\partial G_j)$. 
		Note that the action $G_i\curvearrowright \partial G_i$ is topologically free since $G_i$ is torsion-free. 
		Thus, $C(\partial G_i)\subseteq C(\partial G_i)\rtimes_r G_i$ is a Cartan subalgebra (see \cite[\S 6.1]{Renault}). 		
		By Renault's reconstruction theorem \cite[Theorem~5.9]{Renault}, the isomorphism $C(\partial G_i)\rtimes_r G_i\cong C(\partial G_j)\rtimes_r G_j$ is induced by an isomorphism $\partial G_i\rtimes G_i\cong \partial G_j\rtimes G_j$ of the underlying transformation groupoids. In particular, their associated topological full groups are isomorphic. By \autoref{lma-fullgroup}, this implies that $G_i\cong G_j$, which is a contradiction. 
	\end{proof}

	
	\section{$K$-theory with coefficients}
	We end this note with a variant of \autoref{thm-intro} and \autoref{cor-intro} for $3$-manifolds with torsion in their first homology. 
	Recall that for an Abelian group $A$ and a $C^*$-algebra $B$, the $K$-theory of $B$ with coefficients in $A$ is defined as $K_*(B;A)\coloneqq K_*(B\otimes \mathcal A)$ where $\mathcal A$ is any $C^*$-algebra satisfying the UCT such that $K_0(\mathcal A)\cong A$ and $K_1(\mathcal A)=0$. We refer to \cite{RosenbergSchochet} for the UCT and K\"unneth theorem for $C^*$-algebras. 
	\begin{thm}\label{thm-coeff}
		Let $G$ be the fundamental group of a closed connected orientable hyperbolic $3$-manifold. Let $\mathbb F$ be a field. Then there are isomorphisms 
		\[K_0(C(\partial G)\rtimes_r G;\mathbb F )\cong K_1(C(\partial G)\rtimes_r G;\mathbb F )\cong \mathbb F ^2\oplus H_1(M;\mathbb F )\oplus H^1(M;\mathbb F )\]
		which identify the class $[1]\in K_0(C(\partial G)\rtimes_r G;\mathbb F )$ with $(1,0)\in \mathbb F \oplus (\mathbb F \oplus H_1(M;\mathbb F )\oplus H^1(M;\mathbb F ))$.
	\end{thm}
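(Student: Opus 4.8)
The plan is to run the proof of \autoref{thm-intro} with topological $K$-theory replaced throughout by $K$-theory with coefficients in $\mathbb F$, realised as the homology theory $K_*(-;\mathbb F)=K_*(-\otimes\mathcal A)$ for a fixed UCT algebra $\mathcal A$ with $K_0(\mathcal A)\cong\mathbb F$ and $K_1(\mathcal A)=0$. The crucial structural observation is that this theory takes values in $\mathbb F$-vector spaces: by the UCT of \cite{RosenbergSchochet} the vanishing of $K_1(\mathcal A)$ kills the Ext-term, so that the Kasparov product gives a ring isomorphism $KK_0(\mathcal A,\mathcal A)\cong\operatorname{End}_{\mathbb Z}(\mathbb F)$. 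Restricting along $\mathbb F\hookrightarrow\operatorname{End}_{\mathbb Z}(\mathbb F)$ makes every group $K_*(B\otimes\mathcal A)$ an $\mathbb F$-vector space, and every map induced by a $*$-homomorphism or a $KK$-class between the $B$'s is $\mathbb F$-linear. In particular every short exact sequence arising below is one of $\mathbb F$-vector spaces and therefore splits.

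First I would record the field-coefficient analogue of \autoref{lma-k-homology}. The Atiyah--Hirzebruch spectral sequence computing $K_*(M;\mathbb F)$ has second page $H_p(M;\mathbb F)$ concentrated in even $q$; since $M$ is $3$-dimensional the only potentially non-zero differential is $d^3\colon E^3_{3,q}\to E^3_{0,q+2}$, which vanishes for degree reasons (it is dual to a degree-$3$ stable operation, and such operations annihilate degree-$0$ classes). As $\mathbb F$ is a field the resulting filtration of the abutment splits, yielding
\[K_0(M;\mathbb F)\cong H_0(M;\mathbb F)\oplus H_2(M;\mathbb F),\quad K_1(M;\mathbb F)\cong H_1(M;\mathbb F)\oplus H_3(M;\mathbb F),\]
and the analogous statements for $K^*(M;\mathbb F)$, identifying $[1_M]$ with $[x_0]$. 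Combining these with Poincar\'e duality over $\mathbb F$, so that $H_2(M;\mathbb F)\cong H^1(M;\mathbb F)\cong H_1(M;\mathbb F)$ and $H_0(M;\mathbb F)\cong H_3(M;\mathbb F)\cong H^0(M;\mathbb F)\cong H^3(M;\mathbb F)\cong\mathbb F$, I obtain that each of $K_*(M;\mathbb F)$ and $K^*(M;\mathbb F)$ is isomorphic to $\mathbb F\oplus H_1(M;\mathbb F)$ in both parities, with the unit living in the summand $H_0(M;\mathbb F)=\mathbb F$.

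Next I would supply the two $C^*$-algebraic ingredients with coefficients. Baum--Connes with coefficients holds for $G$ by \cite{Lafforgue}; applied to the trivial action on $\mathcal A$, where $\mathcal A\rtimes_r G\cong C^*_r(G)\otimes\mathcal A$, and using $M\simeq BG=\underline EG$, it produces an isomorphism $K_*(C^*_r(G);\mathbb F)\cong K_*(M;\mathbb F)$ carrying $[1]$ to $[1_M]$ exactly as in \eqref{eq-muG}--\eqref{eq-mu1}. For the Emerson--Meyer sequences I would tensor the exact triangle underlying \cite[Theorem~1]{Gysin} with $\mathcal A$; exactness of the triangle is preserved, and the connecting homomorphism is multiplication by an Euler class whose only non-trivial invariant is $\chi(M)=0$. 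Hence the connecting class vanishes already in $KK$ and continues to vanish with coefficients, producing the split short exact sequences
\[0\to K_*(C^*_r(G);\mathbb F)\xrightarrow{u_*}K_*(C(\partial G)\rtimes_r G;\mathbb F)\to K^{*+1}(M;\mathbb F)\to 0.\]
This vanishing is the main obstacle: one must check that the connecting class of \cite[Theorem~1]{Gysin} is genuinely the integer $\chi(M)=0$ times a fixed $KK$-class, rather than merely acting as $0$ on integral $K$-theory, since the latter would leave room for a non-trivial Bockstein after tensoring with $\mathcal A$.

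Finally I would assemble the pieces. Splitting the sequences and inserting the identifications above gives
\[K_0(C(\partial G)\rtimes_r G;\mathbb F)\cong K_0(M;\mathbb F)\oplus K^1(M;\mathbb F)\cong H_0(M;\mathbb F)\oplus H_2(M;\mathbb F)\oplus H^1(M;\mathbb F)\oplus H^3(M;\mathbb F),\]
which by $H_0(M;\mathbb F)\cong H^3(M;\mathbb F)\cong\mathbb F$ and $H_2(M;\mathbb F)\cong H_1(M;\mathbb F)$ equals $\mathbb F^2\oplus H_1(M;\mathbb F)\oplus H^1(M;\mathbb F)$. The computation of $K_1$ is identical after interchanging the roles of $K^0(M;\mathbb F)$ and $K^1(M;\mathbb F)$. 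Tracking the unit through the identity $[1]=u_*[1_{C^*_r(G)}]$ and the Baum--Connes identification places it in the summand $H_0(M;\mathbb F)=\mathbb F$, that is, at $(1,0)\in\mathbb F\oplus(\mathbb F\oplus H_1(M;\mathbb F)\oplus H^1(M;\mathbb F))$, as claimed.
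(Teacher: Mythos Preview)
Your proof is correct and follows essentially the same strategy as the paper's: realise $K$-theory with $\mathbb F$-coefficients by tensoring with a UCT algebra $\mathcal A$, rerun the Atiyah--Hirzebruch computation, the Baum--Connes identification, and the Emerson--Meyer Gysin sequence with coefficients, and then split everything using that the target category is $\mathbb F$-vector spaces. The only noteworthy difference is in how the long exact sequence breaks into short exact ones: the paper appeals to the K\"unneth theorem, whereas you argue that the Euler class governing the boundary map already vanishes as a $KK$-element because $\chi(M)=0$---your explicit flag that this must hold at the level of $KK$ rather than merely on integral $K$-theory is exactly the point one needs, and it is what makes the K\"unneth argument go through as well.
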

	\begin{proof}
		Since the proof follows the same strategy as the proof of \autoref{thm-intro}, we only explain the necessary modifications.
		Let $\mathcal A$ be a $C^*$-algebra satisfying the UCT such that $K_0(\mathcal A)\cong \mathbb F$ and $K_1(\mathcal A)\cong 0$. 
		
		The $K$-homology of $M$ can be computed using a version of \cite[Proposition~2.1]{Matthey} with coefficients in $\mathbb F$, which in turn can be proven in exactly the same way as the integral version (using the Atiyah--Hirzebruch spectral sequence for $K$-homology with coefficients in $\mathbb F$). 
		Note that the exact sequences \eqref{EM1} are obtained in \cite{Gysin} as long exact sequences in $K$-theory associated to the short exact sequence 
		\[0\to C_0(X)\rtimes_r G \to C(\overline X)\rtimes_r G \to C(\partial G)\rtimes_r G\to 0,\]
		where $X=\widetilde M$ is the universal cover of $M$ and $\overline X$ is its natural compactification. 
		Tensoring this sequence with $\mathcal A$ and applying $K$-theory yields a new six-term exact sequence which again splits into two short exact sequences by the K\"unneth theorem (note that $\mathcal A$ satisfies the K\"unneth theorem since it satisfies the UCT). 
		In this way we obtain the appropriate analogues of the Emerson--Meyer exact sequences with coefficients in $\mathbb F$, 
		\begin{align*}
			0\to K_0(C_r^*(G); \F)\xrightarrow{u_*} K_0(C(\partial G)\rtimes_r G; \F) \to K^1(M; \F)\to 0,\\
			0\to K_1(C_r^*(G); \F)\xrightarrow{u_*} K_1(C(\partial G)\rtimes_r G;\F) \to K^0(M; \F)\to 0,
		\end{align*}
		which split since $\mathbb F$ is a field.
		The left and right hand terms of these sequences are now computed in the same way as before, where Kasparov's Poincar\'e duality theorem \cite[Theorem~4.9]{Kasparov} and the Baum--Connes isomorphism \cite{Lafforgue} have to be taken with coefficients in $\mathcal A$. 
	\end{proof}
	
	\begin{cor}\label{cor-coeff}
		Let $\mathbb F$ be a field and let $\mathcal A$ be a unital simple separable nuclear $C^*$-algebra satisfying the UCT such that $K_0(\mathcal A)\cong \mathbb F$ and $K_1(\mathcal A)\cong 0$. For instance, let $\mathcal A$ be the universal UHF algebra $\mathcal{Q}$ for $\mathbb F = \mathbb Q$, or let $\mathcal A$ be the Cuntz algebra $\mathcal O_{p+1}$ for $\mathbb F=\mathbb F_p$. 
		Let $M$ be a finitely generated Abelian group. Then there are infinitely many pairwise non-isomorphic hyperbolic groups $G_i,~ i\in \N$ with $H_1(G_i)\cong M$ such that the stabilized crossed products 
		\begin{equation*}\label{eq-tensor-product}
			\mathcal A\otimes(C(\partial G_i)\rtimes_r G_i),~ i\in \mathbb N
		\end{equation*}
		are all isomorphic.
	\end{cor}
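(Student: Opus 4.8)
The plan is to mirror the proof of \autoref{cor-intro}, replacing the appeal to \autoref{thm-intro} by \autoref{thm-coeff} and replacing Brock--Dunfield's construction by a Dehn-surgery construction that realizes the prescribed (possibly non-free) homology $M$. Write $M\cong \Z^r\oplus T$ with $T$ finite. First I would produce a single closed connected orientable $3$-manifold $N_0$ with $H_1(N_0)\cong M$, for instance a connected sum of lens spaces realizing the summands of $T$ together with $r$ copies of $S^1\times S^2$ realizing $\Z^r$. By Myers' theorem there is a null-homologous knot $K\subset N_0$ whose complement $N_0\setminus K$ is hyperbolic (apply Myers' result to a knot inside an embedded ball, or homotope such a knot to an excellent one). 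The one-cusped hyperbolic manifold $N_0\setminus K$ then admits $1/n$-Dehn fillings $N_n$, and I would check the three properties needed: each $N_n$ is closed and orientable; a direct homology computation gives $H_1(N_n)\cong M$ (since $K$ is null-homologous we have $H_1(N_0\setminus K)\cong M\oplus \Z$ with the extra summand generated by the meridian $\mu$, and a $1/n$-filling kills exactly $[\mu]$); and by Thurston's hyperbolic Dehn surgery theorem all but finitely many $N_n$ are hyperbolic, with volumes strictly less than and converging to $\operatorname{vol}(N_0\setminus K)$, so that they take infinitely many distinct values and infinitely many $N_n$ are pairwise non-isometric. Relabelling such a subfamily as $N_i$ and setting $G_i:=\pi_1(N_i)$, Mostow rigidity \cite{Mostow} shows the $G_i$ are pairwise non-isomorphic, and since $N_i$ is aspherical we have $H_1(G_i)\cong H_1(N_i)\cong M$.

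Next I would record that the stabilized crossed products are again classifiable. As in \autoref{cor-intro}, each $A_i:=C(\partial G_i)\rtimes_r G_i$ is a unital UCT Kirchberg algebra by \cite[Proposition~3.2]{claire} and \cite[Proposition~10.7]{Tu}. Since $A_i$ absorbs $\mathcal O_\infty$, the tensor product $\mathcal A\otimes A_i$ absorbs $\mathcal O_\infty$ as well and is unital, simple, separable and nuclear, hence a Kirchberg algebra; it lies in the UCT class because that class is closed under tensor products. By the very definition of $K$-theory with coefficients we have $K_*(\mathcal A\otimes A_i)\cong K_*(C(\partial G_i)\rtimes_r G_i;\F)$, which \autoref{thm-coeff} identifies with $\F^2\oplus H_1(N_i;\F)\oplus H^1(N_i;\F)$, sending the class of the unit to $(1,0)$. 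By the universal coefficient theorem the groups $H_1(N_i;\F)$ and $H^1(N_i;\F)$ depend only on $H_1(N_i;\Z)\cong M$, so these pointed $K$-groups coincide for all $i$.

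Finally I would invoke the Kirchberg--Phillips theorem \cite{Kirchberg,Phillips}: the algebras $\mathcal A\otimes A_i$ are unital UCT Kirchberg algebras with pairwise isomorphic $K$-theory matching the classes of the units, hence pairwise isomorphic. I expect the main obstacle to be the geometric input of the first paragraph: unlike the free case handled by Brock--Dunfield, realizing an \emph{arbitrary} finitely generated Abelian group as the first homology of infinitely many pairwise non-isometric closed hyperbolic $3$-manifolds is not immediate, and the care lies in simultaneously controlling hyperbolicity (via Myers and Thurston), orientability, the homology class under filling, and the non-isometry of the members of the family.
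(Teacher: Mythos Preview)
Your argument is correct and essentially parallels the paper's, with one genuine difference in the geometric input. The paper simply invokes \cite{brockdunfield} again to obtain, for an \emph{arbitrary} finitely generated abelian group $M$, infinitely many pairwise non-isometric closed orientable hyperbolic $3$-manifolds with $H_1\cong M$; your worry that Brock--Dunfield only covers the free case is unfounded, and this is why the paper's proof is a one-liner. Your alternative route via Myers' excellent-knot theorem and Thurston's hyperbolic Dehn surgery theorem is a perfectly valid, more hands-on substitute: it trades a black-box citation for an explicit construction, at the cost of a page of standard $3$-manifold topology. One small caveat: the parenthetical ``apply Myers' result to a knot inside an embedded ball'' does not work as stated, since removing a knot contained in a ball leaves the connect-sum spheres of $N_0$ intact and the complement is then reducible; the correct half of your parenthetical is the second one, homotoping a null-homotopic knot to an excellent (hence hyperbolic-complement) knot, which is exactly what Myers guarantees and which preserves the homology class.

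On the $C^*$-algebraic side you and the paper agree. The paper phrases the fact that $\mathcal A\otimes A_i$ is a unital UCT Kirchberg algebra by citing \cite[Proposition~4.5]{KirRor} (for simplicity and pure infiniteness of the tensor product) and \cite[Proposition~10.1.7]{BO}; your formulation via $\mathcal O_\infty$-absorption and closure of the UCT class under tensor products is equivalent. The identification of $[1_{\mathcal A\otimes A_i}]$ with the class tracked in \autoref{thm-coeff}, and the observation that $H_*(N_i;\mathbb F)$ depends only on $H_1(N_i;\Z)\cong M$, are exactly what is needed to feed into Kirchberg--Phillips.
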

	\begin{proof}
		Note that the assumptions on $\mathcal A$ guarantee that the stabilized crossed products in the statement are UCT Kirchberg algebras, see \cite[Proposition~4.5]{KirRor} and \cite[Proposition~10.1.7]{BO}. Thus, the corollary can be proven in the same way as \autoref{cor-intro}, using \cite{brockdunfield} and using \autoref{thm-coeff} instead of \autoref{thm-intro}. 
	\end{proof}
	
	\appendix
	\section{Spectral sequences} \label{appendixA}
		
		In this appendix, we give more details on the computation $K$-theory and $K$-homology (with integer coefficients) of a closed, connected, orientable, 3-dimensional, hyperbolic manifold in terms of homology and cohomology, see \cite[Proposition~2.1]{Matthey} and \autoref{lma-k-homology}. This is done using the Atiyah-Hirzebruch spectral sequence.
		
		We recall that a \textit{cohomological spectral sequence} (in the category of Abelian groups) is given by a collection $\{E_i,d_i\}_{i\in\mathbb{N}}$,
		where, for each $i$, the \textit{page} $E_i$ consists of a grid of Abelian groups, denoted $(E_i^{p,q})_{p,q\in\mathbb{Z}}$, equipped with maps $d_i^{p,q}\colon E_i^{p,q}\to E_i^{p+i, q-i+1}$, called \textit{differentials}, which satisfy $d_i^{p,q}\circ d_i^{p-i,q+i-1}=0$, or, in other words, $\mathrm{Im}(d_i^{p-i,q+i-1})\subseteq \mathrm{Ker}(d_i^{p,q})$. Moreover, the ``next page'' $E_{i+1}$ is given by the homology of the page $E_i$. That is, 
		\[E_{i+1}^{p,q}\cong \frac{\mathrm{Ker}(d_i^{p,q})}{\mathrm{Im}(d_i^{p-i,q+i-1})}. \]
		
		The spectral sequence $\{E_i,d_i\}_{i\in\mathbb{N}}$ is called \textit{degenerate at $i_0$} if the differentials $(d_i^{p,q})_{p,q\in\mathbb{Z}}$ vanish for all $i\geq i_0$. In this case, the \textit{limiting term} can be defined as $E_\infty^{p,q}= E_{i}^{p,q}$, for $p,q\in\mathbb{Z}$, and some $i\geq i_0$. We will only deal with degenerate spectral sequences, although the limiting term can be defined more generally.

		Assume that $(A^n)_{n\in\mathbb{Z}}$ is a collection of Abelian groups, so that each $A^n$ admits a Hausdorff grading, which we write as
		\[A^n\coloneqq G^0A^n\supseteq G^1A^n\supseteq G^2A^n\supseteq\ldots \]
		with $\bigcap_{p=0}^{\infty}G^pA^n=0$. It is said that a (degenerate) spectral sequence $\{E_i,d_i\}_{i\in\mathbb{N}}$ \textit{converges} to $(A^n)_{n\in\mathbb{Z}}$ if for every $p\geq 0$ and $q\in\mathbb{Z}$ we have $E_\infty^{p,q}\cong \frac{G^pA^{p+q}}{G^{p+1}A^{p+q}}$. We will use the notation $E_r^{p,q}\Rightarrow A^{p+q}$ for convergence.

		\begin{lma*}
			Let $M$ be a closed connected orientable 3-manifold.
			We have $K^0(M)\cong H^0(M)\oplus H^2(M)$ and $K^1(M)\cong H^1(M)\oplus H^3(M)$.
		\end{lma*}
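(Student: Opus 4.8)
The plan is to run the Atiyah--Hirzebruch spectral sequence for complex (topological) $K$-theory and exploit the low dimension of $M$. First I would write down the $E_2$-page $E_2^{p,q}\cong H^p(M;K^q(\pt))$. By Bott periodicity $K^q(\pt)\cong\Z$ for $q$ even and $K^q(\pt)=0$ for $q$ odd, so $E_2^{p,q}\cong H^p(M)$ when $q$ is even and $E_2^{p,q}=0$ when $q$ is odd. Since $M$ is a $3$-manifold, $H^p(M)=0$ for $p\notin\{0,1,2,3\}$, so the whole page is concentrated in the four columns $p=0,1,2,3$, repeated in every even row. This sparsity is what forces the result.

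Next I would show degeneration at $E_2$. The differential $d_r$ has bidegree $(r,1-r)$. Every even differential $d_{2r}$ shifts the row $q$ by the odd amount $1-2r$, hence maps an even (possibly nonzero) row into an odd row where all groups vanish; so $d_{2r}=0$ automatically. For the odd differentials the dimension bound is decisive: $d_r$ with $r\geq 3$ raises the column by $r\geq 3$, so both source and target can be nonzero only when $r=3$ and $p=0$, giving a single map $d_3\colon E_3^{0,q}\to E_3^{3,q-2}$, that is $H^0(M)\to H^3(M)$. This is the one step with genuine content. I would argue it vanishes by multiplicativity: the source $E_3^{0,q}\cong H^0(M)\cong\Z$ is generated by a Bott-power of the unit $1\in E_2^{0,0}=H^0(M)$, the unit and the Bott class are permanent cycles, and since the $d_r$ are derivations one has $d_3(1)=0$, whence $d_3\equiv 0$ on the entire zeroth column. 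Equivalently, under the identification $d_3=\beta\circ\mathrm{Sq}^2\circ\rho$ of the first $K$-theoretic differential with an integral Steenrod operation, $\mathrm{Sq}^2$ annihilates classes of degree $0$. All higher differentials vanish by the dimension reason already used, so $E_2=E_\infty$.

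Finally I would read off the associated graded of the Bott-periodic filtration. The surviving terms contributing to $K^0(M)$ are those with $p+q=0$, namely $E_\infty^{0,0}\cong H^0(M)$ and $E_\infty^{2,-2}\cong H^2(M)$, yielding a short exact sequence $0\to H^2(M)\to K^0(M)\to H^0(M)\to 0$; those contributing to $K^1(M)$ are the terms with $p+q=1$, namely $E_\infty^{1,0}\cong H^1(M)$ and $E_\infty^{3,-2}\cong H^3(M)$, yielding $0\to H^3(M)\to K^1(M)\to H^1(M)\to 0$. Both sequences split because their quotient terms are free Abelian: $H^0(M)\cong\Z$ by connectedness, and $H^1(M)\cong\Hom_\Z(H_1(M),\Z)$ is free by the universal coefficient theorem. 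This gives the claimed isomorphisms $K^0(M)\cong H^0(M)\oplus H^2(M)$ and $K^1(M)\cong H^1(M)\oplus H^3(M)$. The main obstacle is exactly the vanishing of $d_3$; every other differential dies for purely formal reasons (parity of the rows and the bound $\dim M=3$).
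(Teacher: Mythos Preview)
Your proof is correct and follows the same route as the paper: run the Atiyah--Hirzebruch spectral sequence, show it degenerates at $E_2$, and split the resulting extensions using that $H^0(M)\cong\Z$ and $H^1(M)\cong\Hom_\Z(H_1(M),\Z)$ are free. The only point of divergence is in the one substantive step, the vanishing of $d_3\colon H^0(M)\to H^3(M)$. The paper argues by naturality of the spectral sequence along the retraction $\pt\to M\to\pt$: since $H^0(\pt)\to H^0(M)$ is an isomorphism and $d_3$ over a point has target $H^3(\pt)=0$, commutativity forces $d_3=0$ on $M$. You instead use the multiplicative structure (the zeroth column is generated by Bott-powers of the unit, all permanent cycles, and $d_3$ is a derivation) or, equivalently, the identification $d_3=\beta\circ\mathrm{Sq}^2\circ\rho$ together with $\mathrm{Sq}^2|_{H^0}=0$. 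Both arguments are valid; yours is the more standard device in the algebraic-topology literature, while the paper's is slightly more self-contained in that it avoids invoking multiplicativity of the spectral sequence or Steenrod operations.
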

		
		\begin{proof}
			The second page of the Atiyah-Hirzebruch spectral sequence for $K$-theory (which applies more generally in the setting of finite CW-complexes) is given by 
			\[E_2^{p,q}= H^p(M,K^q(\mathrm{pt})). \]
			We claim that the differentials $d_2^{p,q}$ vanish.
			Indeed, the $K$-theory of a point is
			\[K^q(\mathrm{pt})=\begin{cases}
				\mathbb{Z}, & \text{if q is even}\\
				0, & \text{otherwise}
			\end{cases}. \]
			
			Moreover, under our assumptions on $M$, we have $H^n(M)=0$ for all $n\notin \{0,1,2,3\}$ (this follows, for example, from Poincar\'{e} duality and the fact that cohomology groups are defined to be zero for negative degrees).
			
			Putting these facts together, we can visualize the second page of the Atiyah-Hirzebruch spectral sequence as follows:
			
			\[\xymatrix@C=1em{
				& \vdots & \vdots & \vdots & \vdots & \vdots & \vdots& \\
				\cdots & 0 & 0 & 0 & 0 & 0 & 0 & \cdots \\
				\cdots & 0 & H^0(M)\ar[drr]^{d_2} & H^1(M) & H^2(M) & H^3(M) & 0 &\cdots \\
				\cdots & 0 & 0&0\ar[drr]^{d_2}&0&0&0 & \cdots \\
				\cdots & 0& H^0(M) & H^1(M) &H^2(M) &H^3(M) & 0 & \cdots\\
				& \vdots & \vdots & \vdots & \vdots & \vdots & \vdots &
			}\]
			\mbox{}\\

			It is immediately visible that the differentials $d_2$ either have vanishing range or vanishing domain, thus must all be the zero map.
			The third page in the Atiyah-Hirzebruch spectral sequence is now concluded to be identical (as a grid) to the second page (since $E_3$ is defined to be the homology of $E_2$). We will check that the differentials $d_3$ vanish as well. In the following figure we draw the only (recurring) differentials which could potentially be non-zero:
			
			\[\xymatrix@C=1em{
				& \vdots & \vdots & \vdots & \vdots & \vdots & \vdots& \\
				\cdots & 0 & 0 & 0 & 0 & 0 & 0 & \cdots \\
				\cdots & 0 & H^0(M)\ar[ddrrr]^{d_3} & H^1(M) & H^2(M) & H^3(M) & 0 &\cdots \\
				\cdots & 0 & 0&0&0&0&0 & \cdots \\
				\cdots & 0& H^0(M) & H^1(M) &H^2(M) &H^3(M) & 0 & \cdots\\
				& \vdots & \vdots & \vdots & \vdots & \vdots & \vdots &
			}\]
			\mbox{}\\
			
			Applying the Atiyah-Hirzebruch spectral sequence to a one-point space, we obtain exactly the same second and third page as obtained above for the manifold $M$, except $H^n(\mathrm{pt})=0$ for all $n\neq 0$.
			
			Consider the natural maps $\mathrm{pt}\to M\to\mathrm{pt}$. Since the Atiyah-Hirzebruch spectral sequence is functorial, we have a retract $E_3(\mathrm{pt})\to E_3(M)\to E_3(\mathrm{pt})$, which is interpreted as the following commutative diagram.
			\[\xymatrix{
				H^0(\mathrm{pt})\ar[d]^{d_3}\ar[r] & H^0(M)\ar[d]^{d_3}\ar[r] & H^0(\mathrm{pt})\ar[d]^{d_3} \\
				H^3(\mathrm{pt})\ar[r] & H^3(M)\ar[r] & H^3(\mathrm{pt})
			}\]
			
			As $H^3(\mathrm{pt})=0$, we have that the differential maps on the left-hand side and right-hand side must vanish. The composition $\xymatrix@C=1.5em{H^0(\mathrm{pt})\ar[r] & H^0(M)\ar[r] & H^0(\mathrm{pt})}$ is the identity map. By our assumptions on $M$ (and Poincar\'{e} duality), we have $H^0(M)\cong H^3(M)\cong H^0(\mathrm{pt})\cong \mathbb{Z}$, which forces the maps $\xymatrix@C=1.5em{H^0(\mathrm{pt})\ar[r] & H^0(M)}$ and $\xymatrix@C=1.5em{H^0(M) \ar[r] & H^0(\mathrm{pt})}$ to be isomorphisms. Rewriting the diagram above, we have
			
			\[\xymatrix{
				\Z\ar[d]^{d_3}\ar[r]^{\cong} & \Z\ar[d]^{d_3}\ar[r]^{\cong} & \Z\ar[d]^{d_3} \\
				0\ar[r] & \Z\ar[r] & 0
			}\]
			which now clearly forces the map $d_3\colon H^{0}(M)\to H^3(M)$ to be the zero map, as we wanted to show.
			
			Observe that for $n\geq 3$, $E_n$ is identical to $E_2$ as a grid, and it is easy to see that the differentials $d_n$ vanish as well for all $n\geq 4$. We conclude that the Atiyah-Hirzebruch spectral sequence is degenerate at $i_0=2$, and so $E_\infty^{p,q}=E_2^{p,q}$, for all $p,q\in \mathbb{Z}$.
			
			Set $A^n\coloneqq K^n(M)$, for $n\in\mathbb{Z}$. By Atiyah--Hirzebruch, there exists a Hausdorff grading $A^n=G^0A^n\supseteq G^1A^n\supseteq G^3A^n\supseteq \ldots$, for every $n\in\mathbb{Z}$ (the concrete description of the grading will follow from the computations below), such that $E_2^{p,q}\Rightarrow K^{p+q}(M)$. 
			
			For $p,q\in\mathbb{Z}$ with $p+q=0$ we get the following short exact sequences:
			\[\xymatrix{
				0\ar[r]& G^1A^0\ar[r] & G^0A^0\ar[r] & E_{\infty}^{0,0}\ar[r] &0 \\
				0\ar[r]& G^2A^0\ar[r] & G^1A^0\ar[r] & E_{\infty}^{1,-1}\ar[r] &0 \\
				0\ar[r]& G^3A^0\ar[r] & G^2A^0\ar[r] & E_{\infty}^{2,-2}\ar[r] &0\\
				0\ar[r]& G^4A^0\ar[r] & G^3A^0\ar[r] & E_{\infty}^{3,-3}\ar[r] &0\\
				& & \vdots
			}\]
			
			\mbox{}\\
			The first short exact sequence reduces to 
			\[\xymatrix{0\ar[r]& G^1A^0\ar[r]& K^0(M)\ar[r]& H^0(M)\ar[r]& 0}. \]
			Since $H^0(M)\cong \mathbb{Z}$ is torsion free, we conclude that $K^0(M)\cong H^0(M)\oplus G^1A^0$.
			
			If $n$ is odd, we have $E_\infty^{n,-n}=0$, so $G^{1}A_0\cong G^{2}A^0$ and $G^3A^0\cong G^4A^0$.
			Note that $E_\infty^{n,-n}=0$ whenever $n\geq 4$. Thus, $G^nA^0\cong G^3A^0$ for all $n\geq 4$. Since the grading is Hausdorff, it follows that $G^nA^0=0$ for all $n\geq 3$.
			Finally, $E_\infty^{2,-2}\cong H^2(M)$.
			Working inductively with the short exact sequences above, we conclude that 
			\[K^0(M)\cong H^0(M)\oplus G^1A^0\cong H^0(M)\oplus G^2A^0\cong H^0(M)\oplus H^2(M). \]
			
			Next, we compute $K^1(M)$ using $E_2^{p,q}\Rightarrow K^{p+q}(M)$. For $p,q\in\mathbb{Z}$ with $p+q=1$ we get the following short exact sequences
			
			\[\xymatrix{
				0\ar[r]& G^1A^1\ar[r] & G^0A^1\ar[r] & E_{\infty}^{0,1}\ar[r] &0 \\
				0\ar[r]& G^2A^1\ar[r] & G^1A^1\ar[r] & E_{\infty}^{1,0}\ar[r] &0 \\
				0\ar[r]& G^3A^1\ar[r] & G^2A^1\ar[r] & E_{\infty}^{2,-1}\ar[r] &0\\
				0\ar[r]& G^4A^1\ar[r] & G^3A^1\ar[r] & E_{\infty}^{3,-2}\ar[r] &0\\
				& & \vdots
			}\]
			
			\mbox{}\\
			Since $E_\infty^{0,1}\cong E_\infty^{2,-1}\cong 0$, the first short exact sequence implies that $K^1(M)\cong G^1A^1$, and the third short-exact sequence implies $G^2A^1\cong G^3A^1$.
			Since $E_\infty^{1,0}\cong H^1(M)\cong \mathrm{Hom}(M,\Z)$ is torsion-free and finitely generated, we have that the second short exact sequence splits, and $G^1A^1\cong H^1(M)\oplus G^2A^1\cong H^1(M)\oplus G^3A^1$. 
			
			Finally, $E_\infty^{3,-2}\cong H^3(M)\cong H_0(M)\cong \Z$ by Poincar\'{e} duality. Thus, the last short exact sequence splits as well, and we have $G^3A^1\cong H^3(M)\oplus G^4A^1$. However, noticing that $E_\infty^{n,m}=0$ for all $n\geq 4$ and all $m\in\mathbb{Z}$, we see that $G^nA^1\cong G^4A^1$ for all $n\geq 4$. Since the grading is Hausdorff, it follows that $G^4A^1=0$, and so $G^3A^1\cong H^3(M)$. Combining the above,
			\[K^1(M)\cong G^1A^1\cong H^1(M)\oplus G^2A^1\cong H^1(M)\oplus G^3A^1\cong H^1(M)\oplus H^3(M).\]
			
		\end{proof}
		
		In a similar manner, one can apply the homological version of the Atiyah-Hirzebruch spectral sequence in order to compute the $K$-homology of $M$, $K_*(M)$. Alternatively, one can use Kasparov's $K$-theoretic Poincar\'e duality and Bott periodicity (as explained in the proof of \autoref{lma-k-homology}) to conclude that $K_0(M)\cong H_0(M)\oplus H_2(M)$ and $K_1(M)\cong H_1(M)\oplus H_3(M)$.

	\bibliography{refs}{}

\begin{thebibliography}{GGKN23}

\bibitem[AD97]{claire}
Claire Anantharaman-Delaroche.
\newblock Purely infinite {$C^*$}-algebras arising from dynamical systems.
\newblock {\em Bull. Soc. Math. France}, 125(2):199--225, 1997.

\bibitem[BD15]{brockdunfield}
Jeffrey~F. Brock and Nathan~M. Dunfield.
\newblock Injectivity radii of hyperbolic integer homology 3-spheres.
\newblock {\em Geom. Topol.}, 19(1):497--523, 2015.

\bibitem[BH99]{bridsonhaeflinger}
Martin~R. Bridson and Andr\'{e} Haefliger.
\newblock {\em Metric spaces of non-positive curvature}, volume 319 of {\em
  Grundlehren der mathematischen Wissenschaften [Fundamental Principles of
  Mathematical Sciences]}.
\newblock Springer-Verlag, Berlin, 1999.

\bibitem[BO08]{BO}
Nathanial~P. Brown and Narutaka Ozawa.
\newblock {\em {$C^*$}-algebras and finite-dimensional approximations},
  volume~88 of {\em Graduate Studies in Mathematics}.
\newblock American Mathematical Society, Providence, RI, 2008.

\bibitem[Bro82]{Brown}
Kenneth~S. Brown.
\newblock {\em Cohomology of groups}, volume~87 of {\em Graduate Texts in
  Mathematics}.
\newblock Springer-Verlag, New York-Berlin, 1982.

\bibitem[CELY17]{K-theory}
Joachim Cuntz, Siegfried Echterhoff, Xin Li, and Guoliang Yu.
\newblock {\em {$K$}-theory for group {$C^*$}-algebras and semigroup
  {$C^*$}-algebras}, volume~47 of {\em Oberwolfach Seminars}.
\newblock Birkh\"{a}user/Springer, Cham, 2017.

\bibitem[EM06]{Gysin}
Heath Emerson and Ralf Meyer.
\newblock Euler characteristics and {G}ysin sequences for group actions on
  boundaries.
\newblock {\em Math. Ann.}, 334(4):853--904, 2006.

\bibitem[ER18]{eckhardt2018c}
Caleb Eckhardt and Sven Raum.
\newblock {$C^*$}-superrigidity of 2-step nilpotent groups.
\newblock {\em Advances in Mathematics}, 338:175--195, 2018.

\bibitem[GGKN23]{Classifiability}
Eusebio Gardella, Shirly Geffen, Julian Kranz, and Petr Naryshkin.
\newblock Classifiability of crossed products by nonamenable groups.
\newblock {\em J. Reine Angew. Math.}, 797:285--312, 2023.

\bibitem[Kas88]{Kasparov}
Gennadi~G. Kasparov.
\newblock Equivariant {$KK$}-theory and the {N}ovikov conjecture.
\newblock {\em Invent. Math.}, 91(1):147--201, 1988.

\bibitem[Kir94]{Kirchberg}
Eberhard Kirchberg.
\newblock The classification of purely infinite {$C^*$}-algebras using
  {K}asparov’s theory.
\newblock {\em Preprint}, 1994.

\bibitem[KR00]{KirRor}
Eberhard Kirchberg and Mikael R{\o}rdam.
\newblock Non-simple purely infinite {$C^\ast$}-algebras.
\newblock {\em Amer. J. Math.}, 122(3):637--666, 2000.

\bibitem[Laf12]{Lafforgue}
Vincent Lafforgue.
\newblock La conjecture de {B}aum-{C}onnes \`a coefficients pour les groupes
  hyperboliques.
\newblock {\em J. Noncommut. Geom.}, 6(1):1--197, 2012.

\bibitem[Lee18]{Lee}
John~M. Lee.
\newblock {\em Introduction to {R}iemannian manifolds}, volume 176 of {\em
  Graduate Texts in Mathematics}.
\newblock Springer, Cham, second edition, 2018.

\bibitem[Mat02]{Matthey}
Michel Matthey.
\newblock Mapping the homology of a group to the {$K$}-theory of its
  {$C^*$}-algebra.
\newblock {\em Illinois J. Math.}, 46(3):953--977, 2002.

\bibitem[Mat12]{Matui}
Hiroki Matui.
\newblock Homology and topological full groups of \'{e}tale groupoids on
  totally disconnected spaces.
\newblock {\em Proc. Lond. Math. Soc. (3)}, 104(1):27--56, 2012.

\bibitem[Mc20]{MM}
Bram Mesland and Mehmet~Haluk \c{S}eng\"{u}n.
\newblock Hecke operators in {$KK$}-theory and the {$K$}-homology of {B}ianchi
  groups.
\newblock {\em J. Noncommut. Geom.}, 14(1):125--189, 2020.

\bibitem[Mos68]{Mostow}
G.~D. Mostow.
\newblock Quasi-conformal mappings in {$n$}-space and the rigidity of
  hyperbolic space forms.
\newblock {\em Inst. Hautes \'{E}tudes Sci. Publ. Math.}, (34):53--104, 1968.

\bibitem[Phi00]{Phillips}
N.~Christopher Phillips.
\newblock A classification theorem for nuclear purely infinite simple
  {$C^*$}-algebras.
\newblock {\em Doc. Math.}, 5:49--114, 2000.

\bibitem[Ren08]{Renault}
Jean Renault.
\newblock Cartan subalgebras in {$C^*$}-algebras.
\newblock {\em Irish Math. Soc. Bull.}, (61):29--63, 2008.

\bibitem[RS87]{RosenbergSchochet}
Jonathan Rosenberg and Claude Schochet.
\newblock The {K}\"{u}nneth theorem and the universal coefficient theorem for
  {K}asparov's generalized {$K$}-functor.
\newblock {\em Duke Math. J.}, 55(2):431--474, 1987.

\bibitem[Tu99]{Tu}
Jean-Louis Tu.
\newblock La conjecture de {B}aum-{C}onnes pour les feuilletages moyennables.
\newblock {\em $K$-Theory}, 17(3):215--264, 1999.

\end{thebibliography}
	\bibliographystyle{alpha}
\end{document}